\documentclass[11pt]{amsart}

\usepackage[T1]{fontenc}
\usepackage{lmodern}
\usepackage{microtype}
\usepackage{amsmath,amssymb,amsthm,mathtools}
\usepackage{enumitem}
\usepackage{hyperref}

\newtheorem{theorem}{Theorem}[section]
\newtheorem{proposition}[theorem]{Proposition}
\newtheorem{lemma}[theorem]{Lemma}
\newtheorem{conjecture}[theorem]{Conjecture}
\newtheorem{problem}[theorem]{Problem}
\theoremstyle{remark}

\newcommand{\Q}{\mathbf Q}
\newcommand{\Z}{\mathbf Z}
\newcommand{\Pj}{\mathbf P}
\newcommand{\Gal}{\operatorname{Gal}}

\newcommand{\Sym}{\operatorname{Sym}}

\title{Van der Waerden's Problem for Cubic Surfaces}
\author{Evan O'Dorney}
\date{October 7, 2026. Written by ChatGPT 6.}

\begin{document}

\begin{abstract}
For a smooth cubic surface $S/\Q$, the absolute Galois group acts on the $27$ lines of $S_{\overline{\Q}}$, giving a subgroup
\[
G_S\subseteq W(E_6).
\]
We study the distribution of $G_S$ when the twenty coefficients of a defining cubic form are chosen independently from a large interval $[-H,H]$.

We prove that, for every $\varepsilon>0$, the number of smooth integral cubic forms of height at most $H$ for which
\[
G_S\neq W(E_6)
\]
is
\[
O_\varepsilon\!\left(H^{19+1/21+\varepsilon}\right).
\]
The principal obstruction is the unique subgroup $W(E_6)^+$ of index $2$. By a theorem of Saito, membership in this subgroup is equivalent to the signed discriminant of the cubic surface being a square. A recent square-value estimate of Bonolis, Pierce, and Woo then gives the exponent $19+1/21$. For each of the other five conjugacy classes of maximal subgroups of $W(E_6)$, recent results on rational points in projective thin sets give the sharper bound $O(H^{19})$.

We also prove the lower bound
\[
N_{\mathrm{exc}}(H)\gg H^{16}
\]
by counting cubic surfaces containing a fixed rational line, and exhibit an $H^{14}$-sized exceptional family with no rational line, arising from rational Eckardt points. These examples lead us to conjecture that
\[
N_{\mathrm{exc}}(H)=H^{16+o(1)}.
\]

Finally, we investigate whether Bhargava's proof of van der Waerden's conjecture for $S_n$ can be adapted to the index-$2$ $E_6$-problem. A double-discriminant argument gives the desired $H^{19+\varepsilon}$ bound whenever the product of the strongly square-dividing primes is sufficiently large. The remaining difficulty comes from weak square divisibility.
\end{abstract}

\maketitle

\section{Introduction}

For a smooth cubic surface with integer coefficients, the Galois group of the $27$ lines is a subgroup of the Weyl group $W(E_6)$. In the spirit of van der Waerden, we ask:

\begin{quote}
If the $20$ coefficients are chosen randomly from a large interval $[-H,H]$, what is the probability that the resulting Galois group is strictly smaller than $W(E_6)$?
\end{quote}

The analogous problem for ordinary polynomials goes back to van der Waerden. If
\[
f(x)=x^n+a_1x^{n-1}+\cdots+a_n,
\qquad
\lvert a_i\rvert\le H,
\]
then the maximal possible Galois group is $S_n=W(A_{n-1})$. Van der Waerden conjectured in 1936 that the number of polynomials for which
\[
\Gal(f)\neq S_n
\]
is $O(H^{n-1})$. This was recently proved for all $n$ by Bhargava \cite{Bhargava2025}.

There is a parallel type-$B$ problem. A reciprocal polynomial of degree $2n$ has its roots grouped into pairs
\[
\{\alpha,\alpha^{-1}\},
\]
so its generic Galois group is the hyperoctahedral group
\[
C_2\wr S_n=W(B_n).
\]
Anderson, Bertelli, and O'Dorney proved that the proportion of reciprocal polynomials whose Galois group is smaller than $W(B_n)$ is of order
\[
\frac{\log H}{H}.
\]
Here, unlike in the type-$A$ problem, the dominant contribution comes from an index-$2$ subgroup, characterized by a square condition on
\[
(-1)^n f(1)f(-1).
\]
See \cite{ABO2026}.

Thus general and reciprocal polynomials may be viewed as particularly natural linear families for the Weyl groups of types $A$ and $B$, respectively. The family of cubic surfaces provides an equally classical, but structurally rather different, family for the exceptional Weyl group $W(E_6)$.

Let
\[
V=\Sym^3(\Z^4)
\]
be the rank-$20$ lattice of quaternary cubic forms. For
\[
F\in V,
\]
write $S_F\subseteq\Pj^3$ for the cubic surface $F=0$. The classical configuration of the $27$ lines gives
\[
G_F:=\Gal(K_F/\Q)
\hookrightarrow W(E_6),
\]
where $K_F$ is the field over which all $27$ lines are defined. The geometric monodromy group of the universal family is the full group $W(E_6)$, of order $51840$; this is the cubic-surface case of Harris's theory of Galois groups of enumerative problems \cite{Harris1979}. In particular, Hilbert irreducibility implies that a density-one set of integral cubic surfaces has full Galois group $W(E_6)$. This qualitative density-one statement was already noted by Poonen and Voloch in their study of random Diophantine equations \cite{PoonenVoloch}. Our concern here is the corresponding quantitative problem: how rapidly does the exceptional proportion tend to zero, and what geometric families account for the dominant exceptional contribution?

There is an important difference between this situation and the classical polynomial problems. A general cubic surface does give rise, by elimination, to degree-$27$ polynomials whose roots encode the $27$ lines. However, the coefficients of these polynomials are highly nonlinear functions of the twenty coefficients of $F$. We are not aware of any linear subspace in the coefficient space of degree-$27$ polynomials whose specializations cut out all $W(E_6)$-extensions of $\Q$. Thus the natural parameter space for the problem is not a space of univariate polynomials but the linear space $V$ of cubic forms itself.

For $H\ge1$, set
\[
V(H)=
\{F\in V:\lVert F\rVert_\infty\le H\},
\]
and define
\[
N_{\mathrm{exc}}(H)
=
\#\left\{
F\in V(H):
S_F\text{ smooth and }G_F\neq W(E_6)
\right\}.
\]

Our first result is the following.

\begin{theorem}\label{thm:main}
For every $\varepsilon>0$,
\[
H^{16}\ll N_{\mathrm{exc}}(H)
\ll_\varepsilon
H^{19+1/21+\varepsilon}.
\]
More precisely, let
\[
W(E_6)^+=
\ker\!\left(
\det:W(E_6)\to\{\pm1\}
\right).
\]
Then
\[
\#\{F\in V(H):G_F\subseteq W(E_6)^+\}
\ll_\varepsilon
H^{19+1/21+\varepsilon},
\]
while the number of $F$ for which $G_F$ is contained in a proper maximal subgroup other than $W(E_6)^+$ is
\[
O(H^{19}).
\]
\end{theorem}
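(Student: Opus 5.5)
The proof has three parts: an upper bound for the index-$2$ subgroup, upper bounds for the other maximal subgroups, and a lower-bound construction. First note that every proper subgroup of $W(E_6)$ lies in a maximal one, and $W(E_6)$ has six conjugacy classes of maximal subgroups. One is $W(E_6)^+$. The other five are the stabilizer of a line, of dimension-type $W(D_5)$ (index $27$); the stabilizer of a tritangent plane (index $45$); the stabilizer of a double-six (index $36$); and the two remaining classes of index $40$. So $N_{\mathrm{exc}}(H)$ is at most the sum over these six classes $M$ of the number of $F\in V(H)$ with $G_F$ conjugate into $M$. Each such condition is equivalent to a rational point on a cover of the coefficient space. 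Concretely, let $Y_M=\widetilde{V}/M$, where $\widetilde V\to V$ is the (generically Galois) $W(E_6)$-cover of marked cubic surfaces over the smooth locus. Then $G_F\subseteq M$ up to conjugacy iff the fibre of $Y_M\to V$ over $F$ has a rational point. Hence the exceptional set is a thin set of type II, plus a type I part coming from the discriminant locus and the smoothness-failing locus. The nonsmooth forms lie on the discriminant hypersurface, so they contribute only $O(H^{19})$ by a trivial count.

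For $M=W(E_6)^+$, the cover $Y_M\to V$ is the double cover $z^2=\pm\Delta(F)$. Here $\Delta$ is the discriminant of the cubic form, of degree $32$ in the coefficients, and the sign is the one fixed by Saito's theorem. By Saito's theorem, $G_F\subseteq W(E_6)^+$ iff $\pm\Delta(F)$ is a nonzero square. So I will bound $\#\{F\in V(H):\pm\Delta(F)=\square\}$ by invoking the Bonolis--Pierce--Woo square-value bound for a polynomial in $n=20$ variables of degree $d$ that is not a square times a constant. Their bound has shape $H^{n-1+1/(\text{something})+\varepsilon}$, and with the relevant invariants of $\Delta$ this yields $H^{19+1/21+\varepsilon}$. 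The inputs to verify are that $\Delta$ is absolutely irreducible, so $\pm\Delta$ is not a square in $\overline{\Q}[V]$, and whatever nondegeneracy hypothesis their theorem requires, for instance that the affine double cover is irreducible, or a condition on the leading form. Irreducibility of the discriminant hypersurface (the dual variety of the Veronese) is classical. I expect this matching of hypotheses, and the arithmetic yielding exactly $1/21$, to be the main technical obstacle.

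For the other five classes $M$, $Y_M\to V$ is a geometrically irreducible cover of degree $[W(E_6):M]\ge 27$, because the monodromy group is all of $W(E_6)$ (Harris) and $M$ is a proper subgroup. Moreover the cover does not factor through the double cover, since $M\not\subseteq W(E_6)^+$ for these classes; at least the maximal ones besides $W(E_6)^+$ surject onto $\{\pm1\}$. Recent results on rational points in projective thin sets give $O(H^{n-1})=O(H^{19})$ for type II thin sets of degree at least $3$ in an $n$-dimensional linear family, or at least when the cover is not ramified only in a way that forces a square condition. The cone $V$ is homogeneous, and the cover is invariant under scaling by $\Q^\times$ up to the relevant cocycle, so the projective version applies. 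I will check the hypothesis, likely that the covering degree is $\ge2$ and the cover is not of "square type", in each case, using that these subgroups have index $>2$.

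For the lower bound, fix the line $L=\{x_0=x_1=0\}$. Then $F$ contains $L$ iff the four coefficients of $x_2^3,x_2^2x_3,x_2x_3^2,x_3^3$ vanish. That leaves $16$ free coefficients and hence $\gg H^{16}$ forms, of which a positive proportion are smooth because smoothness is a nonempty open condition. For each such $F$ the line $L$ is rational, so $G_F$ fixes a line and lies in the index-$27$ stabilizer. This gives $N_{\mathrm{exc}}(H)\gg H^{16}$.
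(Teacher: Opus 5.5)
Your three-part structure and your tools match the paper's. You use Saito's criterion followed by the Bonolis--Pierce--Woo square-value estimate for $W(E_6)^+$. For the five nonnormal maximal classes you use the projective thin-set theorem of Buggenhout--Cluckers--Salberger--Santens--Vermeulen; their covers of $\Pj^{19}$ are connected by Harris's monodromy theorem, have degree at least $27$, and are therefore ramified. For the lower bound you use the rank-$16$ lattice $V_L$. Those last two parts are fine. You should, however, make the passage from projective points to $V(H)$ explicit, as the paper does: write $F=dF_0$ with $F_0$ primitive, note that $S_{dF_0}=S_{F_0}$, and sum $\sum_{d\le H}O((H/d)^{19})=O(H^{19})$.

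The gap is in the index-$2$ step. This is exactly where the theorem's exponent comes from, and you leave it as \emph{whatever nondegeneracy hypothesis their theorem requires} and $1/(\text{something})$.

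\begin{itemize}
\item \textbf{The exponent.} The Bonolis--Pierce--Woo bound in $n$ variables is $H^{n-1+1/(n+1)+\varepsilon}$. The saving depends only on $n$, not on $\deg\Delta=32$ or any other invariant of $\Delta$, so $1/21$ is simply $1/(20+1)$.

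\item \textbf{The missing hypothesis.} Irreducibility of $Y^2-\Delta$, which you correctly obtain from absolute irreducibility of $\Delta$, is only one of the two hypotheses. The other, which you did not identify, is nondegeneracy: there is no nonzero $v\in V_{\Q}$ with $D_v\Delta\equiv 0$. Equivalently, no linear change of variables makes $\Delta$ independent of one of the twenty coordinates. Your alternative guess, a condition on the leading form, is empty here because $\Delta$ is homogeneous.

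\item \textbf{Why it matters.} Nondegeneracy is what entitles you to take $n=20$. If $\Delta$ were a polynomial in $m<20$ linear forms, the count would factor as $H^{20-m}$ times an $m$-variable count. The estimate would then only yield $H^{19+1/(m+1)+\varepsilon}$.

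\item \textbf{How the paper proves it.} This is Lemma~\ref{lem:nondegenerate}. Since $\Delta$ is $\operatorname{SL}_4$-invariant, $K=\{v: D_v\Delta\equiv 0\}$ is an $\operatorname{SL}_4$-stable subspace of the irreducible representation $V_{\Q}=\Sym^3((\Q^4)^*)$. Hence $K=0$ or $K=V_{\Q}$, and the latter would make $\Delta$ constant.
\end{itemize}

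With this lemma added, your index-$2$ argument gives $N_+(H)\ll_\varepsilon H^{19+1/21+\varepsilon}$, and the proof is complete.
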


The exponent $1/21$ comes from a recent theorem of Bonolis, Pierce, and Woo on square values of multivariate polynomials: in $n$ variables their general estimate is
\[
H^{n-1+1/(n+1)+\varepsilon}.
\]
Here $n=20$ \cite{BPW2025}.

The remaining maximal subgroups have indices
\[
27,\quad36,\quad40,\quad40,\quad45.
\]
Recent work of Buggenhout, Cluckers, Salberger, Santens, and Vermeulen resolves Serre's projective thin-set problem in the degree range relevant here and gives the full one-power saving for these covers \cite{BCSSV2026}.

Since the number of singular cubic forms in $V(H)$ is $O(H^{19})$, Theorem~\ref{thm:main} implies
\[
\Pr(G_F\neq W(E_6)\mid S_F\text{ smooth})
\ll_\varepsilon
H^{-20/21+\varepsilon}.
\]

We regard the upper bound in Theorem~\ref{thm:main} as only a first approximation to the truth.

\begin{conjecture}\label{conj:sharp}
One has
\[
N_{\mathrm{exc}}(H)=H^{16+o(1)}.
\]
\end{conjecture}

A stronger form would assert
\[
N_{\mathrm{exc}}(H)\sim cH^{16}
\]
for an explicit positive constant $c$, with the leading term arising from surfaces having a rational line.

The lower bound $H^{16}$ is elementary. If, for example,
\[
L=\{x_0=x_1=0\},
\]
then the condition $L\subseteq S_F$ imposes four independent linear conditions on the twenty coefficients of $F$. Thus there are $\asymp H^{16}$ such forms, and a positive proportion are smooth. Since $L$ is rational, $G_F$ fixes one of the $27$ lines and hence lies in the index-$27$ maximal subgroup $W(D_5)$.

The main difficulty in improving Theorem~\ref{thm:main} is instead the index-$2$ subgroup. This parallels both the alternating group $A_n$ in Bhargava's proof and the principal index-$2$ subgroup in the reciprocal-polynomial problem.

\section{Cubic surfaces and their monodromy}

Let
\[
\Pj(V_{\Q})=\Pj^{19}.
\]
There is an irreducible discriminant hypersurface
\[
\mathcal D\subseteq\Pj^{19}
\]
parametrizing singular cubic surfaces. Its defining polynomial has degree $32$. Let
\[
U=\Pj^{19}\setminus\mathcal D.
\]

Over $U$ there is a finite \'{e}tale cover
\[
\mathcal L\longrightarrow U
\]
of degree $27$, whose fiber above $S$ is the set of lines on $S$. Its Galois closure
\[
\widetilde U\longrightarrow U
\]
has group
\[
W(E_6).
\]
The incidence relations among the lines explain why the monodromy is much smaller than $S_{27}$; Harris proved that it is nevertheless as large as these relations permit \cite{Harris1979}.

The maximal-subgroup structure is particularly favorable. Besides the unique normal subgroup
\[
W(E_6)^+
\]
of index $2$, there are five conjugacy classes of maximal nonnormal subgroups, of indices
\[
27,\;36,\;40,\;40,\;45.
\]
The index-$27$ group stabilizes a line, the index-$36$ group a double-six, one index-$40$ group stabilizes a Steiner complex or $A_2^3$-root subsystem, and the index-$45$ group stabilizes a tritangent plane. See \cite{Dolgachev2012}.

For a maximal subgroup $M<W(E_6)$, consider
\[
X_M=\widetilde U/M.
\]
Then
\[
X_M\to U
\]
has degree
\[
[W(E_6):M].
\]

For $F\in U(\Q)$,
\[
G_F
\]
is contained in a conjugate of $M$ if and only if the fiber of $X_M\to U$ above $F$ has a rational point.

Thus the exceptional Galois-group problem becomes a union of six thin-set counting problems.

\section{The five nonquadratic covers}

We first dispense with the five nonnormal maximal subgroups.

\begin{proposition}\label{prop:nonquadratic}
Let $M<W(E_6)$ be a maximal subgroup other than $W(E_6)^+$. Then
\[
\#\left\{
F\in V(H):
S_F\text{ smooth},\
G_F\subseteq gMg^{-1}
\text{ for some }g
\right\}
\ll H^{19}.
\]
\end{proposition}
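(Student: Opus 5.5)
The plan is to reduce to a thin-set count on $\Pj^{19}$ and invoke the projective thin-set bound of \cite{BCSSV2026}. Fix a maximal subgroup $M\neq W(E_6)^+$ and consider the cover $X_M\to U$ of degree $d=[W(E_6):M]\in\{27,36,40,40,45\}$. By the discussion at the end of the previous section, a smooth $F$ has $G_F$ contained in a conjugate of $M$ exactly when the fiber of $X_M\to U$ above $[F]$ has a rational point. The set we must count is therefore contained in the image $\pi(X_M(\Q))\subseteq\Pj^{19}(\Q)$, lifted to primitive and non-primitive integer vectors. Since $\widetilde U\to U$ is a connected Galois cover with group $W(E_6)$, the quotient $X_M$ is geometrically irreducible. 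Because $M$ is a proper subgroup, $d\ge2$, so the image is a thin set of type II.

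I also need that $M$ does not contain the geometric monodromy in a way that would make the cover trivial over $\overline{\Q}$. This holds because Harris's theorem gives geometric monodromy all of $W(E_6)$. Hence $X_M$ stays irreducible over $\overline{\Q}$ and the degree is genuinely $d$.

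The next step is to apply the thin-set bound. The cited result settles Serre's problem for type II thin sets in $\Pj^{n}$ arising from covers of degree $d$ in a suitable range, with $n=19$. I expect it to give
\[
\#\{x\in\pi(X_M(\Q)) : H(x)\le B\}\ll B^{n}=B^{19},
\]
where the implied constant may depend on the cover but not on $B$. Here the relevant degree is the degree of the cover, not the degree of the discriminant. I also need to extend $X_M$ to a projective normal variety $\overline{X}_M$ with a generically finite map to $\Pj^{19}$. Taking the normalization of $\Pj^{19}$ in the function field of $X_M$ does this, and it adds only points over $\mathcal D$, which are singular forms and are excluded anyway.

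Finally, I would pass from projective points to affine integer vectors. Each primitive $F_0\in\Z^{20}$ of height $h$ contributes the multiples $kF_0$ with $|k|\le H/h$, and the property $G_F\subseteq gMg^{-1}$ is invariant under scaling. The affine count is therefore at most
\[
\sum_{h\le H}\frac{H}{h}\,\#\{x:H(x)=h\}.
\]
Partial summation against the bound $\ll h^{19}$ gives $\ll H\cdot H^{18}=H^{19}$. For the dyadic piece $h\sim 2^j$, the count is at most $2^{19j}$ and the multiplier is $H/2^j$, so the pieces sum geometrically to $H^{19}$ with no logarithm. Summing over the five classes of $M$ completes the proof.

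The main obstacle is checking that the hypotheses of \cite{BCSSV2026} are met for each of the five covers. Specifically, one must confirm that the degree-$d$ range it covers includes $d\le 45$ in dimension $19$, and that no log factor enters for the degree-$2$ issues, which do not arise here since $d\ge27$. If the cited result instead needs the degree of $\overline{X}_M$ under a chosen embedding, I would bound that degree by an explicit construction of the line, double-six, Steiner complex, $A_2^3$ and tritangent-plane incidence varieties inside products of Grassmannians with $\Pj^{19}$.
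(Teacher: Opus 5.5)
Your proposal is correct and is essentially the paper's proof: you normalize $\Pj^{19}$ in the function field of $X_M$, invoke the Buggenhout--Cluckers--Salberger--Santens--Vermeulen thin-set bound for the resulting cover of degree $27$, $36$, $40$, or $45$ to get $O(H^{19})$ primitive projective points, and then pass to all integral vectors. The only difference is cosmetic: you sum over the primitive height $h$ with multiplier $H/h$ via partial summation, whereas the paper sums over the scalar $d$ in $F=dF_0$ and gets $\sum_{d\le H}(H/d)^{19}=O(H^{19})$; both reductions are valid.
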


\begin{proof}
Normalize $\Pj^{19}$ in the function field of $X_M$. We obtain a finite dominant cover
\[
\overline X_M\longrightarrow\Pj^{19}
\]
of degree
\[
27,\ 36,\ 40,\ 40,\text{ or }45.
\]

It is ramified. Indeed, a nontrivial connected finite cover of projective space cannot be everywhere \'{e}tale in characteristic zero; geometrically, ramification occurs over the discriminant divisor.

The theorem of Buggenhout--Cluckers--Salberger--Santens--Vermeulen on projective thin sets gives
\[
O(H^{19})
\]
rational points of projective height at most $H$ in the image, since all the cover degrees are at least $5$ \cite{BCSSV2026}.

To return from primitive projective representatives to all integral coefficient vectors, write
\[
F=dF_0,\qquad F_0\text{ primitive}.
\]
Scalar multiplication does not change the cubic surface or its lines. Thus
\[
\sum_{d\le H}
O\!\left((H/d)^{19}\right)
=O(H^{19}).
\]
\end{proof}

Accordingly, a van der Waerden bound
\[
N_{\mathrm{exc}}(H)\ll H^{19}
\]
is reduced entirely to the index-$2$ cover.

\section{The index-$2$ subgroup and the signed discriminant}

The unique normal subgroup
\[
W(E_6)^+\triangleleft W(E_6)
\]
is the kernel of the determinant character of the reflection representation:
\[
1\longrightarrow W(E_6)^+
\longrightarrow W(E_6)
\overset{\det}{\longrightarrow}
\{\pm1\}
\longrightarrow1.
\]
It is simple and has order $25920$.

Let
\[
\Delta(F)
\]
denote a fixed primitive integral normalization of the signed discriminant whose square class gives this character. It is a homogeneous polynomial of degree $32$ in the twenty coefficients of $F$.

Saito's determinant formula implies that, for a smooth cubic surface,
\[
G_F\subseteq W(E_6)^+
\quad\Longleftrightarrow\quad
\Delta(F)\in\Q^{\times2}.
\]
Since $\Delta(F)$ is integral, this is equivalent, after the fixed normalization, to
\[
\Delta(F)=y^2
\]
for some integer $y$ \cite{Saito2012}.

This characterization was also studied explicitly for cubic surfaces by Elsenhans and Jahnel, particularly in pentahedral coordinates \cite{ElsenhansJahnel2012}.

Thus define
\[
N_+(H)
=
\#\left\{
F\in V(H):
\Delta(F)\neq0,\ \Delta(F)\text{ is a square}
\right\}.
\]

The index-$2$ problem is exactly the problem of bounding $N_+(H)$.

\begin{lemma}\label{lem:irreducible}
The discriminant polynomial $\Delta$ is absolutely irreducible.
\end{lemma}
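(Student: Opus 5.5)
The plan is to deduce irreducibility of $\Delta$ from the geometry of its zero locus, the discriminant hypersurface $\mathcal D\subseteq\Pj^{19}$. The classical model is the incidence variety
\[
I=\{(p,[F])\in\Pj^3\times\Pj^{19}: F(p)=0,\ \nabla F(p)=0\}.
\]
For fixed $p$, the conditions on $F$ are four independent linear conditions. Independence holds because the map $F\mapsto \nabla F(p)$ is surjective onto $\overline{\Q}^4$, as one checks at $p=[1:0:0:0]$ using the monomials $x_0^3, x_0^2x_1, x_0^2x_2, x_0^2x_3$; Euler's relation makes $F(p)=0$ automatic once $\nabla F(p)=0$. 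So $I$ is a $\Pj^{15}$-bundle over $\Pj^3$, hence irreducible of dimension $18$, and it is defined over $\Q$. Its image under the second projection is exactly $\mathcal D$, so $\mathcal D$ is geometrically irreducible, as already asserted in the paper's setup. I would also check that $\mathcal D$ has dimension $18$, i.e.\ is a hypersurface. This follows because a generic singular cubic, for instance one with a single $A_1$ node, has only finitely many singular points, so $I\to\mathcal D$ is generically finite.

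Irreducibility of the reduced locus only shows $\Delta=c\,P^k$ with $P$ absolutely irreducible defining $\mathcal D$. The key remaining step is to show $k=1$, i.e.\ that $\Delta$ vanishes to order exactly one along $\mathcal D$. Here I expect the main obstacle, because the signed normalization of $\Delta$ is defined through the square class of the determinant character rather than as the reduced equation. I would try two routes.

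The first route is a degree comparison. The reduced equation of $\mathcal D$ has degree $4(d-1)^3=32$ for $d=3$, computed as the degree of the classical discriminant of quaternary forms (equivalently, by pushing forward $[I]$ and using birationality of $I\to\mathcal D$). Since $\Delta$ also has degree $32$, this forces $k=1$, provided $\Delta$ is a nonzero multiple of a power of $P$. That proviso follows since $\Delta$ vanishes exactly on the singular locus.

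The second route is to check a transverse line in $\Pj^{19}$ through a nodal cubic. Along such a line, the double cover defined by $\sqrt{\Delta}$ must be ramified over $\mathcal D$, since the local monodromy around a nodal degeneration is a reflection in $W(E_6)$, which has determinant $-1$. Hence $\Delta$ has odd order along $\mathcal D$. Combined with the degree count, this confirms $k=1$.

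Finally, $\Delta$ is primitive integral and equals $\pm P$ up to a rational scalar. Therefore it is irreducible over $\overline{\Q}$, which is the statement.
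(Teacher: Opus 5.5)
Your proof is correct, and its first half is the paper's proof. The paper uses the same incidence variety $\{(F,P):\nabla F(P)=0\}$, notes that its fibres over $\Pj^3$ are linear, and concludes that $\mathcal D$ is geometrically irreducible.

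The paper stops there. As you observe, irreducibility of the zero locus only gives $\Delta=cP^k$ with $P$ absolutely irreducible, so your multiplicity step is genuinely needed and is missing from the paper's argument. It is also the part that matters in the proof of Theorem~\ref{thm:index2}, since $Y^2-cP^k$ factors over $\overline{\Q}$ when $k$ is even.

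Either of your routes suffices by itself. The degree count gives $k=1$ directly: the reduced equation has degree $4\cdot 2^3=32$, using that $I\to\mathcal D$ is birational because a general singular cubic has a single node. The Picard--Lefschetz argument gives $k$ odd, and since $k\deg P=32$ is a power of $2$, this already forces $k=1$ without computing $\deg P$.

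Your checks that the four linear conditions are independent and that $\dim\mathcal D=18$ also supply details the paper leaves implicit.
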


\begin{proof}
Consider the incidence variety
\[
I=
\{(F,P)\in\Pj(V)\times\Pj^3:
\nabla F(P)=0\}.
\]
For fixed $P$, the condition $\nabla F(P)=0$ consists of four linear equations on $F$. Hence $I$ is irreducible. Its image in $\Pj(V)$ is precisely the discriminant hypersurface $\mathcal D$. Therefore $\mathcal D$ is geometrically irreducible.
\end{proof}

\begin{lemma}\label{lem:nondegenerate}
There is no nonzero vector
\[
v\in V_{\Q}
\]
such that
\[
D_v\Delta\equiv0.
\]
Consequently, after no transformation in
\[
\operatorname{GL}(V_{\Q})
\]
can $\Delta$ be made independent of one of the twenty variables.
\end{lemma}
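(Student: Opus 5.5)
We argue by contradiction, using the geometry of the discriminant hypersurface $\mathcal D$ from Lemma~\ref{lem:irreducible}. Suppose $v\in V_{\Q}$ is nonzero with $D_v\Delta\equiv0$. Then $\Delta(F+tv)=\Delta(F)$ identically in $t$, so $\mathcal D$ is a cone: it is invariant under translation by $v$ in the affine space $V$. Projectively, $\mathcal D$ contains the line joining $[F]$ and $[v]$ for every $[F]\in\mathcal D$, so $[v]$ lies in the vertex of the cone. The second claim of the lemma follows at once, since a $\operatorname{GL}(V_{\Q})$-change of variables making $\Delta$ independent of a coordinate produces exactly such a $v$.

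The key structural input is the group $G=\operatorname{GL}_4$ acting on $V$. The discriminant is $\operatorname{SL}_4$-invariant, so $\Delta(g\cdot F)=\det(g)^{24}\Delta(F)$; the exponent is $3\cdot32/4$. Hence the subspace
\[
K=\{v\in V_{\overline\Q}: D_v\Delta\equiv0\}
\]
is $\operatorname{GL}_4$-stable: if $D_v\Delta=0$, then differentiating the invariance relation shows $D_{gv}\Delta=0$ as well. But $V_{\overline\Q}=\Sym^3$ of the standard representation is an irreducible $\operatorname{GL}_4$-module in characteristic zero. Therefore $K=0$ or $K=V$. If $K=V$, all partial derivatives of $\Delta$ vanish, which forces $\Delta$ to be constant; this contradicts $\deg\Delta=32$. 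So $K=0$, and no nonzero rational $v$ exists.

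For completeness, I would check two points. First, $\Delta$ is a nonzero polynomial of positive degree, which is given earlier: it has degree $32$ and defines $\mathcal D$. Second, $\Sym^3$ of the standard representation is irreducible, which is standard and follows from highest weight theory. The only genuine step is the $G$-stability of $K$. The cleanest proof uses $\Delta\circ g=\chi(g)\Delta$ with $\chi(g)=\det(g)^{24}$. Applying the chain rule, $D_v(\Delta\circ g)=(D_{gv}\Delta)\circ g$, while also $D_v(\Delta\circ g)=\chi(g)D_v\Delta$. So $D_v\Delta=0$ implies $D_{gv}\Delta=0$.

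I expect the main obstacle to be justifying $\operatorname{GL}_4$-semi-invariance of $\Delta$ from the paper's definition, since $\Delta$ was only specified as a primitive normalization of the signed discriminant. I would handle this through irreducibility. The hypersurface $\mathcal D$ is $\operatorname{GL}_4$-stable, because singularity is preserved by linear changes of coordinates. Since $\Delta$ is irreducible by Lemma~\ref{lem:irreducible}, $\Delta\circ g$ is a scalar multiple $\chi(g)\Delta$. This $\chi$ is a character of $\operatorname{GL}_4$, so it is a power of $\det$. That is all the argument needs.
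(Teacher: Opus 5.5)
Your proof is correct and is essentially the paper's argument: the paper also notes that $K=\{v: D_v\Delta=0\}$ is a stable subspace, using invariance of $\Delta$ (under $\operatorname{SL}_4$ rather than your $\operatorname{GL}_4$), and then concludes $K=0$ from irreducibility of $\Sym^3$ and nonconstancy of $\Delta$. Your chain-rule check and your derivation of semi-invariance from the irreducibility of $\Delta$ and the stability of $\mathcal D$ supply detail the paper merely asserts, and the cone discussion in your first paragraph is not needed.
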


\begin{proof}
Let
\[
K=
\{v\in V_{\Q}:D_v\Delta=0\}.
\]
The group $\operatorname{SL}_4$ acts irreducibly on
\[
V_{\Q}=\Sym^3((\Q^4)^*).
\]
The discriminant is invariant under $\operatorname{SL}_4$, so $K$ is an $\operatorname{SL}_4$-stable linear subspace.

Hence either
\[
K=0
\qquad\text{or}\qquad
K=V_{\Q}.
\]
The second possibility would make $\Delta$ constant. Thus $K=0$.
\end{proof}

We can now apply Bonolis--Pierce--Woo.

\begin{theorem}\label{thm:index2}
For every $\varepsilon>0$,
\[
N_+(H)
\ll_\varepsilon
H^{19+1/21+\varepsilon}.
\]
\end{theorem}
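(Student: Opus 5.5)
The plan is to apply the Bonolis--Pierce--Woo square-value estimate \cite{BPW2025} directly to the degree-$32$ form $\Delta$ in $n=20$ variables. Their theorem bounds
\[
\#\{x\in\Z^n\cap[-H,H]^n:\ f(x)=y^2\text{ for some }y\in\Z\}\ll_{\varepsilon,f}H^{n-1+1/(n+1)+\varepsilon}
\]
for an integral polynomial $f$, subject to nondegeneracy hypotheses. With $n=20$ this is exactly $H^{19+1/21+\varepsilon}$, so essentially all of the work is checking those hypotheses for $f=\Delta$.

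The hypotheses, as I recall them, come in two kinds. First, $f(x)-y^2$ should be absolutely irreducible, or equivalently $f$ should not be a constant times a square over $\overline{\Q}$. Second, $f$ should be nondegenerate, meaning it cannot be written, after a linear change of variables, in fewer than $n$ variables. For the homogeneous part of top degree, which here is $\Delta$ itself, the relevant projective hypersurface should be nonsingular, or at least satisfy a dimension bound on its singular locus. I will verify these in order.

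\begin{enumerate}[label=(\roman*)]
\item Lemma~\ref{lem:irreducible} shows that $\Delta$ is absolutely irreducible. Since $\deg\Delta=32\ge1$, $\Delta$ is not a square times a constant over $\overline{\Q}$, so $\Delta(x)-y^2$ is absolutely irreducible as a polynomial in $(x,y)$.
\item Lemma~\ref{lem:nondegenerate} shows that no linear change of variables in $\operatorname{GL}(V_\Q)$ makes $\Delta$ depend on fewer than $20$ variables. This is the nondegeneracy hypothesis.
\item The implied constant depends only on $\Delta$, which is fixed, so the $\varepsilon$-dependence is as stated.
\end{enumerate}

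The main obstacle is matching the exact form of the BPW hypotheses. Their sharpest exponent $n-1+1/(n+1)$ may require more than irreducibility and nondegeneracy. It may require, for instance, that the leading form define a hypersurface with singular locus of small codimension, and the discriminant hypersurface $\mathcal D$ is singular along large strata: cubics with two nodes, cuspidal cubics, and so on. If such a condition is needed, I would first check whether BPW's general theorem only needs $f$ to be irreducible and not a function of fewer variables; I believe their general $n$-variable bound is stated this way, via a lattice/sieve argument with a polynomial-dependence reduction.

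Failing that, I would fall back on a slicing argument. Fix all but a generic low-dimensional set of coordinates, and restrict $\Delta$ to a generic affine line or plane. By Bertini and Lemma~\ref{lem:irreducible}, the restriction remains absolutely irreducible and not a square for all but $O(H^{19})$ of the slices. Then apply the lower-dimensional square-value bound on each slice and sum over the slices. This accepts a possibly weaker exponent unless the structure is exploited more carefully.

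Finally, the forms with $\Delta(F)=0$ are excluded by definition of $N_+(H)$, so no separate treatment is needed. The bound $N_+(H)\ll_\varepsilon H^{19+1/21+\varepsilon}$ then follows immediately.
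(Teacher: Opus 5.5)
Your proposal is correct and matches the paper's proof: apply the Bonolis--Pierce--Woo square-value bound to $Y^2-\Delta(X_1,\ldots,X_{20})$ with $n=20$, taking absolute irreducibility from Lemma~\ref{lem:irreducible} (your remark that an absolutely irreducible $\Delta$ of positive degree is not a constant times a square is exactly why $Y^2-\Delta$ is irreducible) and nondegeneracy from Lemma~\ref{lem:nondegenerate}. The paper treats these two conditions as the complete hypotheses of the BPW estimate and imposes nothing on the singular locus of $\mathcal D$, so your contingency plan is not needed; as you yourself note, the slicing fallback would not reach the exponent $19+1/21$ in any case.
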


\begin{proof}
Apply the square-value theorem of Bonolis, Pierce, and Woo to
\[
Y^2-\Delta(X_1,\ldots,X_{20}).
\]
By Lemmas~\ref{lem:irreducible} and~\ref{lem:nondegenerate}, its irreducibility and nondegeneracy hypotheses are satisfied. Their estimate in $n$ variables is
\[
H^{n-1+1/(n+1)+\varepsilon}.
\]
Putting $n=20$ gives
\[
H^{19+1/21+\varepsilon}.
\]
See \cite{BPW2025}.
\end{proof}

Combining Theorem~\ref{thm:index2} with Proposition~\ref{prop:nonquadratic} proves the upper bound in Theorem~\ref{thm:main}.

\section{A Bhargava-type approach to the quadratic cover}

The exponent $19+1/21$ comes from a general-purpose square sieve. The geometry of the cubic-surface discriminant suggests that a sharper argument should be possible.

Bhargava's proof of van der Waerden's conjecture is especially relevant. In the type-$A$ problem, the alternating group is detected by the condition that the polynomial discriminant is a square. The proof does not simply apply a square sieve to the discriminant. Instead, it distinguishes several forms of discriminant divisibility and uses, among other tools, a double-discriminant argument \cite{Bhargava2025}.

A related mechanism also appears in the reciprocal-polynomial problem, where the difficult index-$2$ subgroups are again handled by adaptations of Bhargava's discriminant methods \cite{ABO2026}.

We now record the portion of that method which transfers directly to cubic surfaces.

Choose a primitive cubic form
\[
F_0\in V
\]
with
\[
\Delta(F_0)\neq0.
\]
Extend $F_0$ to an integral basis of $V$, and write
\[
F=F_{\mathbf b}+tF_0,
\qquad
\mathbf b\in\Z^{19}.
\]
Then
\[
\Delta(F_{\mathbf b}+tF_0)
\]
is a degree-$32$ polynomial in $t$ whose leading coefficient is the nonzero constant
\[
\Delta(F_0).
\]

Define the double discriminant
\[
R(\mathbf b)
=
\operatorname{Res}_t
\left(
\Delta(F_{\mathbf b}+tF_0),
\frac{\partial}{\partial t}
\Delta(F_{\mathbf b}+tF_0)
\right).
\]
Since $\Delta$ is irreducible and has positive degree in $t$,
\[
R\not\equiv0.
\]

For an integral cubic form $F$, define
\[
C_{\mathrm{str}}(F)
=
\prod_{\substack{p\ \mathrm{prime}\\
\Delta(F)\equiv0\pmod p\\
\nabla\Delta(F)\equiv0\pmod p}}
p,
\]
ignoring a fixed finite set of primes depending only on our normalization.

The primes occurring in $C_{\mathrm{str}}$ are those for which square divisibility of the discriminant is stable under every first-order perturbation of the coefficients.

\begin{proposition}\label{prop:strong}
For every $\varepsilon>0$,
\[
\#\left\{
F\in V(H):
\Delta(F)\neq0,\
C_{\mathrm{str}}(F)>H
\right\}
\ll_\varepsilon H^{19+\varepsilon}.
\]
\end{proposition}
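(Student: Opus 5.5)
We follow the double-discriminant step in Bhargava's proof. Write $F=F_{\mathbf b}+tF_0$ with $\mathbf b\in\Z^{19}$ and $t\in\Z$, using the fixed basis above. Since the change of basis is fixed, $F\in V(H)$ forces $\lVert\mathbf b\rVert_\infty\ll H$ and $\lvert t\rvert\ll H$. Set $g_{\mathbf b}(t)=\Delta(F_{\mathbf b}+tF_0)$.

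\textbf{Step 1: strong primes divide $R(\mathbf b)$.} If $p\mid C_{\mathrm{str}}(F)$, then $p\mid\Delta(F)$ and $p$ divides every partial derivative of $\Delta$ at $F$. In particular $p$ divides the directional derivative $D_{F_0}\Delta(F)=g_{\mathbf b}'(t)$. So $t$ is a common root modulo $p$ of $g_{\mathbf b}$ and $g_{\mathbf b}'$. The leading coefficient of $g_{\mathbf b}$ is $\Delta(F_0)$; we add its prime divisors to the excluded finite set. Then the resultant vanishes modulo $p$, i.e. $p\mid R(\mathbf b)$.

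\textbf{Step 2: the case $R(\mathbf b)=0$.} Since $R\not\equiv0$, the $\mathbf b$ with $\lVert\mathbf b\rVert\ll H$ and $R(\mathbf b)=0$ number $O(H^{18})$, by the trivial Schwartz--Zippel bound. With $O(H)$ choices of $t$, these contribute $O(H^{19})$.

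\textbf{Step 3: the case $R(\mathbf b)\neq0$.} Here $\lvert R(\mathbf b)\rvert\ll H^{O(1)}$, so $R(\mathbf b)$ has $O_\varepsilon(H^\varepsilon)$ squarefree divisors $q$. Let $q=C_{\mathrm{str}}(F)$. By Step 1, $q\mid R(\mathbf b)$ and $q>H$. For each prime $p\mid q$, $t$ must reduce to a common root of $g_{\mathbf b}$ and $g_{\mathbf b}'$ modulo $p$. The polynomial $g_{\mathbf b}$ is nonzero of degree $32$ modulo $p$, because its leading coefficient is a unit, so there are at most $32$ such residues. By the Chinese remainder theorem, $t$ lies in at most $32^{\omega(q)}=O_\varepsilon(H^\varepsilon)$ classes modulo $q$. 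Since $q>H$, each class contains $O(1)$ values of $t$ with $\lvert t\rvert\ll H$. Summing over $q$ and over the $O(H^{19})$ vectors $\mathbf b$ gives $O_\varepsilon(H^{19+\varepsilon})$.

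One detail needs care. Whenever $\lvert t\rvert\le cH$ is not below $q$, we should truncate $q$ to a divisor in $(H,H^{1+\varepsilon}]$, or simply use the count $O(H/q+1)$ per class. Both give the same bound.

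The main obstacle is Step 1: we must check that the definition of $C_{\mathrm{str}}$ really forces $p\mid g'_{\mathbf b}(t)$, and that the excluded primes (those dividing $\Delta(F_0)$ and the normalization constants) form a fixed finite set. Both follow from the chain rule and from the leading coefficient being the constant $\Delta(F_0)$. The rest is the divisor bound.
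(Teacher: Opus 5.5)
Your proposal is correct and follows the paper's proof essentially step for step. Strong primes divide the double discriminant $R(\mathbf b)$; the locus $R(\mathbf b)=0$ contributes $O(H^{18})\cdot O(H)$; and otherwise the divisor bound, together with the $32^{\omega(q)}$ residue-class count modulo the squarefree $q=C_{\mathrm{str}}(F)>H$, leaves $O(1)$ values of $t$ per class (your treatment of the primes dividing $\Delta(F_0)$ and the $O(H/q+1)$ count is just the bookkeeping the paper leaves implicit).
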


\begin{proof}
If $p\mid C_{\mathrm{str}}(F)$, then
\[
\Delta(F_{\mathbf b}+tF_0)
\equiv
\frac{\partial}{\partial t}
\Delta(F_{\mathbf b}+tF_0)
\equiv0\pmod p.
\]
Hence
\[
p\mid R(\mathbf b).
\]
Thus
\[
C_{\mathrm{str}}(F)\mid R(\mathbf b).
\]

Suppose first that
\[
R(\mathbf b)\neq0.
\]
Since $R$ is a fixed polynomial,
\[
\lvert R(\mathbf b)\rvert\ll H^A
\]
for some fixed $A$. It therefore has
\[
H^{o(1)}
\]
divisors.

For a squarefree divisor
\[
C\mid R(\mathbf b),
\qquad
C>H,
\]
the congruence
\[
\Delta(F_{\mathbf b}+tF_0)\equiv0\pmod C
\]
has $H^{o(1)}$ residue classes modulo $C$, since the degree in $t$ is $32$ and the leading coefficient is fixed and nonzero outside a finite set of primes. Since $C>H$, each residue class contributes $O(1)$ possible $t$ in the interval under consideration.

Thus each $\mathbf b$ contributes $H^{o(1)}$ possibilities for $t$, giving
\[
H^{19+o(1)}
\]
points.

If
\[
R(\mathbf b)=0,
\]
there are $O(H^{18})$ possibilities for $\mathbf b$, and allowing all $O(H)$ values of $t$ again gives $O(H^{19})$.
\end{proof}

This is precisely the one-power saving sought in a van der Waerden theorem.

The limitation is equally important. From
\[
p^2\mid\Delta(F)
\]
one cannot conclude
\[
\nabla\Delta(F)\equiv0\pmod p.
\]
At a smooth point of the discriminant divisor, $\Delta$ itself is a local parameter, and that parameter may take a value divisible by $p^2$ without its differential vanishing modulo $p$.

Accordingly, the unresolved part of a Bhargava-style proof is the estimate
\[
\#\left\{
F\in V(H):
\Delta(F)\text{ is a nonzero square},\
C_{\mathrm{str}}(F)\le H
\right\}
\ll_\varepsilon H^{19+\varepsilon}.
\]

For ordinary polynomials, Bhargava is able to distinguish polynomial discriminant from field discriminant: weak square divisibility often reflects nonmaximality of the order rather than genuine ramification in the number field. He then uses information about number fields and small generators.

There is not presently an equally direct object for cubic surfaces. One may eliminate the lines and obtain a degree-$27$ polynomial, but its coefficients are nonlinear functions of $F$, and there is no known linear family of degree-$27$ polynomials parameterizing the $W(E_6)$-extensions arising from cubic surfaces. Thus the small-field-discriminant portion of Bhargava's argument does not transfer formally.

This seems to be the central arithmetic problem of the paper.

\section{Lower bounds and accumulating families}

\subsection{Rational lines}

Fix the line
\[
L=\{x_0=x_1=0\}.
\]
The restriction
\[
F|_L
\]
is a binary cubic. Requiring it to vanish identically gives four independent linear conditions on $V$. Therefore
\[
V_L=\{F:L\subseteq S_F\}
\]
has rank $16$.

The singular forms do not fill $V_L$, since there exist smooth cubic surfaces containing $L$. Hence
\[
\#\{F\in V_L(H):S_F\text{ smooth}\}
\asymp H^{16}.
\]

Every such surface has a rational line, so
\[
G_F
\]
lies in the line stabilizer, the maximal subgroup of index $27$. Thus
\[
N_{\mathrm{exc}}(H)\gg H^{16}.
\]

This proves the lower bound in Theorem~\ref{thm:main}.

There is a useful heuristic indicating that allowing the rational line to vary should still produce total order $H^{16}$. A rational line $L$ corresponds to a point of
\[
\operatorname{Gr}(2,4).
\]
If its Pl\"ucker height is $T$, the lattice of restrictions of cubic forms to $L$ has covolume on the scale $T^6$: the exponent $6$ is the determinant exponent of the third symmetric power of a rank-$2$ lattice.

Thus one expects approximately
\[
\frac{H^{16}}{T^6}
\]
forms of height $H$ containing a given line of height $T$. Since rational lines of height at most $T$ occur on the scale $T^4$, the corresponding height sum behaves like
\[
H^{16}
\sum_L H(L)^{-6},
\]
which converges.

This is the principal heuristic behind Conjecture~\ref{conj:sharp}.

\subsection{Rational Eckardt points}

The rational-line family is not the only substantial exceptional locus.

Consider cubic forms
\[
F=
x_0Q_2(x_0,x_1,x_2,x_3)
+
B_3(x_1,x_2),
\]
where $Q_2$ is an arbitrary quaternary quadratic and $B_3$ an arbitrary binary cubic. This is a $14$-dimensional linear family.

Let
\[
P=(0:0:0:1).
\]
For a general member, the tangent plane at $P$ is
\[
x_0=0,
\]
and
\[
S_F\cap\{x_0=0\}
=
\{B_3(x_1,x_2)=0\},
\]
which is the union over $\overline{\Q}$ of three lines through $P$. Thus $P$ is a rational Eckardt point and the Galois group preserves the associated tritangent plane.

If $B_3$ is irreducible over $\Q$, the surface contains no rational line. Indeed, the only rational point of this tritangent-plane section is $P$, and any line on the surface through $P$ must lie in the tangent plane.

Since reducible binary cubics form a lower-order subset and smoothness is an open condition, this gives:

\begin{proposition}
There are
\[
\gg H^{14}
\]
smooth cubic forms of height at most $H$ whose line Galois group is proper but which have no rational line.
\end{proposition}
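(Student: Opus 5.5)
The plan is to count inside the $14$-dimensional lattice
\[
V_E=\{x_0Q_2(x_0,x_1,x_2,x_3)+B_3(x_1,x_2)\}\subseteq V,
\]
which has rank $10+4=14$, so that $\#V_E(H)\asymp H^{14}$. We then remove three thin subsets: the singular forms, the forms with $B_3$ reducible over $\Q$, and the forms whose tangent-plane section is degenerate.

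\textbf{Smoothness.} The discriminant $\Delta$ restricted to $V_E$ is a polynomial, and it is not identically zero provided one smooth member exists. I would exhibit one, e.g.\ $x_0(x_3^2+x_0^2+x_1^2+x_2^2)+x_1^3+x_2^3$, and check smoothness by a direct gradient computation. This is the only genuinely computational step. An alternative is to argue that a general cubic surface with an Eckardt point can be moved into this normal form, and such surfaces are smooth in general. Given a nonzero restriction, the forms in $V_E(H)$ with $\Delta=0$ lie on a hypersurface of the $14$-dimensional space and number $O(H^{13})$.

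\textbf{Irreducibility of $B_3$.} The set of binary cubics with $B_3$ reducible over $\Q$ (having a rational linear factor) has $O(H^{3}\log H)$ or at worst $O(H^{3+\varepsilon})$ members of height $\le H$, out of $\asymp H^4$. This follows from a classical count, or simply from Hilbert irreducibility in its quantitative form (Cohen/S.D.~Cohen's theorem) applied to the generic binary cubic. Multiplying by the $\asymp H^{10}$ choices of $Q_2$, the reducible case contributes $o(H^{14})$. Hence $\gg H^{14}$ forms remain that are smooth and have $B_3$ irreducible, which in particular means $B_3$ has three distinct linear factors over $\overline{\Q}$.

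\textbf{Geometry for the surviving forms.} Since $F\equiv B_3 \pmod{x_0}$, we have $S_F\cap\{x_0=0\}=\{x_0=B_3=0\}$, which is three distinct lines through $P=(0:0:0:1)$ permuted transitively by Galois. This plane is a rational tritangent plane, so $G_F$ lies in its stabilizer, the index-$45$ maximal subgroup, and is therefore proper.

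For the absence of rational lines, I would argue as follows. Let $\ell\subseteq S_F$ be a rational line. The line $\ell$ meets the rational plane $\{x_0=0\}$ in a rational point or lies in it. It cannot lie in it, since the three lines in that plane are conjugate and none is rational. So $\ell\cap\{x_0=0\}$ is a rational point of $\{x_0=B_3=0\}$. Because $B_3$ is irreducible with no rational root on $\Pj^1$, that point must have $x_1=x_2=0$, i.e.\ it is $P$. A line through $P$ on $S_F$ lies in $T_PS_F=\{x_0=0\}$, a contradiction. (We have $\nabla F(P)=(Q_2(P),0,0,0)$ with $Q_2(P)\neq0$, because smoothness forces the $x_3^2$ coefficient to be nonzero.)

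The main obstacle is the existence of one explicit smooth member, which needs an honest check. The counting steps are routine.
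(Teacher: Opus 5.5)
Your proposal is correct and follows the same route as the paper. It uses the $14$-dimensional Eckardt family $x_0Q_2+B_3(x_1,x_2)$, the rational tritangent plane $x_0=0$ forcing $G_F$ into the index-$45$ stabilizer, and the exclusion of rational lines via the unique rational point $P$ of the plane section together with $T_PS_F=\{x_0=0\}$. You also fill in details the paper leaves implicit: an explicit smooth member (your example $x_0(x_0^2+x_1^2+x_2^2+x_3^2)+x_1^3+x_2^3$ is indeed smooth), the count showing that forms with reducible $B_3$ contribute only $o(H^{14})$, and the observation that a rational line must meet the rational plane $x_0=0$ in a rational point.
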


Thus even if Conjecture~\ref{conj:sharp} is correct, a secondary term of geometrically different origin should be expected.

\section{The index-$2$ cover as an arithmetic variety}

The quadratic cover
\[
\mathcal Q:
\qquad
y^2=\Delta(F)
\]
deserves independent study.

Naively, since
\[
\deg\Delta=32,
\]
one might expect square values to be extremely rare. This reasoning is unreliable: the cover contains arithmetic subvarieties on which the discriminant acquires unexpected factorization or square structure.

Elsenhans and Jahnel observed precisely this phenomenon in the pentahedral family. In pentahedral coordinates they construct a generalized Cremona transformation which reduces the apparent degree of the discriminantal covering from $32$ to $8$, and they exhibit accumulating subvarieties carrying rational points \cite{ElsenhansJahnel2012}.

This gives a warning against treating $\Delta(F)$ as a random integer of size $H^{32}$. The central question is instead geometric:

\begin{problem}
Classify the accumulating subvarieties of
\[
y^2=\Delta(F)
\]
with respect to coefficient height.
\end{problem}

A satisfactory solution of this problem might give a sharp estimate for $N_+(H)$, rather than merely the exponent $19+1/21$.

For comparison, in the reciprocal-polynomial setting the analogous quadratic obstruction is sufficiently explicit to be counted asymptotically and turns out to be the dominant exceptional family \cite{ABO2026}. In the present problem, it is not yet clear whether the index-$2$ family can compete with the $H^{16}$ rational-line contribution.

\section{Conjectures and further questions}

We conclude with three increasingly strong targets.

\begin{conjecture}[van der Waerden bound]
\[
N_{\mathrm{exc}}(H)\ll H^{19}.
\]
\end{conjecture}

By Proposition~\ref{prop:nonquadratic}, this is entirely an index-$2$ problem.

Even the slightly weaker estimate
\[
N_+(H)\ll_\varepsilon H^{19+\varepsilon}
\]
would remove the $1/21$ loss in Theorem~\ref{thm:main}.

\begin{conjecture}[sharp exponent]
\[
N_{\mathrm{exc}}(H)=H^{16+o(1)}.
\]
\end{conjecture}

This predicts that no exceptional Galois-group family has a larger exponent than the locus of cubic surfaces with a rational line.

\begin{conjecture}[rational lines dominate]
There exists $c>0$ such that
\[
N_{\mathrm{exc}}(H)\sim cH^{16},
\]
and the main term is accounted for by cubic surfaces containing a rational line.
\end{conjecture}

The third conjecture is deliberately more speculative. In particular, it requires a precise understanding of the quadratic discriminant cover and of all possible accumulating subvarieties.

Several further questions suggest themselves.

First, what is the true order of
\[
N_+(H)?
\]
The general square sieve gives
\[
H^{19+1/21+\varepsilon},
\]
but there is at present a large gap between this and the explicitly known accumulating families.

Second, can the weakly square-divisible range in Section~5 be interpreted geometrically or arithmetically in a way analogous to the passage from polynomial discriminant to number-field discriminant in Bhargava's proof?

Third, can one prove directly that surfaces with rational lines contribute
\[
c_{\mathrm{line}}H^{16}+o(H^{16})
\]
after summing uniformly over rational lines?

Finally, it would be natural to formulate analogous problems for other exceptional Weyl groups arising as monodromy groups of classical enumerative problems. From this perspective, the classical $A$-type problem of van der Waerden, the $B$-type reciprocal-polynomial problem, and the present $E_6$-problem appear as the first instances of a broader arithmetic statistics of Weyl-group monodromy.

\end{document}